\documentclass[preprint,12pt]{elsarticle}

\usepackage{amssymb}
\usepackage{amsmath}
\usepackage{amsthm}

\newtheorem{theorem}{Theorem}
\newtheorem{lemma}{Lemma}

\newtheorem{definition}{Definition}
\newtheorem{remark}{Remark}

\begin{document}

\begin{frontmatter}

%% Title, authors and addresses

%% use the tnoteref command within \title for footnotes;
%% use the tnotetext command for theassociated footnote;
%% use the fnref command within \author or \affiliation for footnotes;
%% use the fntext command for theassociated footnote;
%% use the corref command within \author for corresponding author footnotes;
%% use the cortext command for theassociated footnote;
%% use the ead command for the email address,
%% and the form \ead[url] for the home page:
%% \title{Title\tnoteref{label1}}
%% \tnotetext[label1]{}
%% \author{Name\corref{cor1}\fnref{label2}}
%% \ead{email address}
%% \ead[url]{home page}
%% \fntext[label2]{}
%% \cortext[cor1]{}
%% \affiliation{organization={},
%%             addressline={},
%%             city={},
%%             postcode={},
%%             state={},
%%             country={}}
%% \fntext[label3]{}

\title{Countable periodic solutions of the Lorentz force equation under a time-dependent current}

%% use optional labels to link authors explicitly to addresses:
%% \author[label1,label2]{}
%% \affiliation[label1]{organization={},
%%             addressline={},
%%             city={},
%%             postcode={},
%%             state={},
%%             country={}}
%%
%% \affiliation[label2]{organization={},
%%             addressline={},
%%             city={},
%%             postcode={},
%%             state={},
%%             country={}}

%\author{} %% Author name

%% Author affiliation
%\affiliation{organization={},%Department and Organization
%            addressline={}, 
%            city={},
%            postcode={}, 
%            state={},
%            country={}}

\author[label1,label3]{Ka Xie}
\ead{xieka@amss.ac.cn}

\author[label1,label3]{Pengcheng Xu\corref{cor1}}
\ead{xupc@amss.ac.cn}
%\ead[url]{home page}
\cortext[cor1]{Corresponding author}
           
\author[label1,label3,label4]{Zuohuan Zheng}
\ead{zhzheng@amt.ac.cn}

%Academy of Mathematics and System Sciences, Chinese Academy of Sciences,  , 
\affiliation[label1]{organization={Academy of Mathematics and Systems Science, Chinese Academy of Sciences},
            addressline={55 Zhongguancun East Road, Haidian District},
            city={Beijing},
            postcode={100190},
            %state={},
            country={China}}

\affiliation[label3]
            {organization={University of Chinese Academy of Sciences},
            addressline={19 Yuquan Road, Shijingshan District},
            city={Beijing},
            postcode={100049},
            %state={},
            country={China}}  

\affiliation[label4]
            {organization={Chinese Academy of Sciences' Reliability Assurance Center},
            addressline={9 Dengzhuang South Road, Haidian District},
            city={Beijing},
            postcode={100094},
            %state={},
            country={China}}              

%% Abstract
\begin{abstract}
%% Text of abstract
The resonant dynamics of a charged particle, governed by the Lorentz force equation in an electromagnetic field generated by a current-carrying wire with a small harmonic modulation, is considered in this study. When regarded as a Hamiltonian system with periodic perturbation, the resonance of periodic orbits in the unperturbed system is analyzed by the Melnikov method. The existence of exactly one harmonic radial periodic solution with period $T_1$ is confirmed, matching the period of the current. Moreover, it is established that any other radial periodic solution must be subharmonic with period $nT_1$ for some integer $n > 1$, with at most one such solution for each $n$. Dynamically, these surviving periodic orbits correspond to invariant cylinders that partition the phase space and globally confine the particle's radial motion.

\end{abstract}

%%Graphical abstract
%\begin{graphicalabstract}
%\includegraphics{grabs}
%\end{graphicalabstract}

%%Research highlights
%\begin{highlights}
%\item Research highlight 1
%\item Research highlight 2
%\end{highlights}

%% Keywords
\begin{keyword}
Lorentz force equation; Electromagnetic field; Straight wire; Oscillating current; Radially periodic solution; Melnikov method
%% keywords here, in the form: keyword \sep keyword

%% PACS codes here, in the form: \PACS code \sep code

\MSC[]{34C25 \sep 78A35 \sep 83A05 \sep 34E10 \sep 34D30}
%% MSC codes here, in the form: \MSC code \sep code
%% or \MSC[2008] code \sep code (2000 is the default)

\end{keyword}

\end{frontmatter}

%% Add \usepackage{lineno} before \begin{document} and uncomment 
%% following line to enable line numbers
%% \linenumbers

%% main text
%%

%% Use \section commands to start a section
\section{Introduction}
The Lorentz force equation, which describes the motion of a charged particle slowly accelerated in an electromagnetic field, is one of the fundamental equations in Mathematical Physics. This equation, historically formulated by Planck\cite{planck1906prinzip} and Poincaré\cite{Poincar1906SurLD} in 1906, remains central to the study of particle dynamics in various physical systems, ranging from plasma physics to astrophysical phenomena. Classic treatments of the equation can be found in numerous textbooks, such as \cite{griffiths2023introduction}, \cite{jackson1999classical}, and \cite{landau2013classical}. Despite its long-standing history, the qualitative and quantitative understanding of solutions to the Lorentz force equation has been sorely lacking until recently. The existence of solutions under Dirichlet and Neumann conditions is proved by the topological degree method \cite{bereanu2008boundary}. A systematic investigation of the Lorentz force equation has recently commenced, employing the tools of critical point theory. More precisely, several existence and multiplicity results for the Lorentz force equation subject to periodic or Dirichlet boundary conditions have been obtained by \cite{arcoya2019critical} \cite{arcoya2020lusternik} using rigorous critical point theory.

A special solution, known as the periodic solution, has attracted significant attention in the study of the Lorentz force equation due to their connection to recurring and stable behaviors in electromagnetic fields. The existence of periodic solutions for the Lorentz force equation has been researched by many authors. The existence of periodic solutions for a long range of potentials that permit isolated singularities has been established\cite{garzon2020periodic}. This is achieved through a topological approach, which encompasses significant physical cases, such as the Coulomb potential and the magnetic dipole. The existence of $T$-periodic solutions for the Lorentz force equation, in the relevant physical configuration where the electric field exhibits a singularity at zero, is established by using Szulkin’s critical point theory \cite{szulkin1986minimax}, providing further insight into the dynamics of charged particles in singular electromagnetic fields\cite{Arcoya2023}. In a recent development, Alberto Boscaggin demonstrates the existence of infinitely many $T$-periodic solutions for the Lorentz force equation in the context of a singular electric field $E$\cite{boscaggin2024infinitely}. This proof employs a min-max principle of the Lusternik-Schnirelmann type within the framework of non-smooth critical point theory. The findings have practical applications, including the motion of a charged particle under the action of a Liénard-Wiechert potential and the relativistic forced Kepler problem. Furthermore, it is important to highlight that the application of variational methods to singular problems remains an open area of research.

The dynamics of the Lorentz force equation become interesting for a charged particle in an electromagnetic field induced by a time-dependent current along an infinitely long and infinitely thin straight wire recently. The corresponding non-relativistic dynamics have been studied extensively in \cite{aguirre2010motion}, \cite{gascon2004motion},  \cite{gascon2005some}, together with other autonomous wire distributions with symmetries. When the current in an electrically neutral wire is constant, the electric field vanishes, and the magnetic field can be obtained via the Biot-Savart law, reducing it from Maxwell's equations in a rigorous way. However, for a time-dependent current, the regime is no longer magnetostatic, and the Biot–Savart law does not hold in the non-stationary case. Hence it is necessary to deduce the electromagnetic field by solving Maxwell's equations. For compactly supported current distributions, solutions to the Maxwell's equations can be obtained by introducing retarded potentials. However, in this case the infinite wire does not have a compact support. Nevertheless, it is proven that the corresponding retarded potential still gives a solution of Maxwell’s equations at least in the distributional sense\cite{garzon2021motions}.

Within the framework of this model, radial periodic solutions form a primary focus of interest. This type of motion describes a fundamental type of bounded periodic motion for charged particles in electromagnetic fields. Such periodic behavior can model a range of important physical phenomena. For instance, in plasma confinement devices, charged particles exhibit transverse oscillations while streaming along magnetic field lines. Similarly, in particle accelerators, the time-varying magnetic fields used to focus beams and enhance stability can give rise to periodic motion in the transverse plane. Moreover, in certain astrophysical systems, such as simplified models of charged particle dynamics in the magnetospheres of pulsars or around current-carrying cosmic strings, similar radial periodic motions can occur under the influence of complex electromagnetic fields. In all of the above systems, identifying and characterizing radial periodic solutions provides a foundational key to understanding complex charged particle dynamics in oscillatory or non-uniform electromagnetic environments. The existence of radial periodic solutions within an explicit interval of the perturbation parameter $k$ is proved by \cite{garzon2023periodic} using the topological degree method. Concretely, the system has cylindrical symmetry, and the corresponding linear and angular relative momentum are conserved. Thus, the Lorentz force equation is reduced to a planar Hamiltonian system with one degree of freedom. The existence of radially periodic solutions is then proved by using the topological degree, where the interval of the perturbation parameter $k$ depends on the values of the current and the conserved momenta. 

However, several questions remain to be addressed. One question is, when the current is a harmonic function, how many harmonic radial periodic solutions exist for the Lorentz force equation with periods matching the current's oscillation? The second question is, is there any radial periodic solution with a different period than the current's, and how many periodic solutions exist in the system? The third question is, where are these periodic solutions located? 

To answer these questions, a constant current with small harmonic modulation is analyzed. In the unperturbed case corresponding to a constant current, a continuous family of radial periodic orbits is shown to exist, whose oscillation frequencies vary monotonically with energy. When harmonic modulation is introduced, the system exhibits rich resonance phenomena. Using Melnikov analysis, it is demonstrated that exactly one harmonic radial periodic solution exists, which lies near the periodic orbit of the unperturbed system and in which the radial oscillation locks to the driving frequency. Furthermore, it is established that any other radial periodic solution must be subharmonic with period $nT_1$, where $n>1$ is an integer and $T_1$ is the current's period, and that for each $n$, at most one such solution exists. These periodic solutions act as dynamical barriers in the extended phase space, preserving a hierarchy of invariant regions where the charged particle remains trapped.

This paper is structured as follows. In Section\ref{sec2}, the preparatory knowledge is presented, and the model is rigorously constructed from physical principles. In Section\ref{sec4}, the unperturbed system of a one-degree-of-freedom Hamiltonian system  equivalent to the dynamics of the Lorentz force equation is analyzed. It is demonstrated that a unique equilibrium exists, forming a global center, and that the periods of periodic solutions around this center increase monotonically with the Hamiltonian energy. Finally, in Section\ref{sec5}, the Melnikov method is employed to establish the existence of a countable number of radial periodic solutions under small harmonic current modulation. These solutions are characterized by periods that are integer multiples of the current's period, with at most one periodic solution corresponding to each integer multiple.

\section{The Lorentz force equation with an infinite wire}\label{sec2}
Without loss of generality, by normalizing both the speed of light in vacuum and the charge-to-mass ratio to 1, the Lorentz force equation in Special Relativity under study is then the following equation,
\begin{equation}\label{eq04}
    \frac{d}{dt} \left( \frac{\dot{q}(t)}{\sqrt{1 - |\dot{q}(t)|^2}} \right) = E(t, q(t)) + \dot{q}(t) \times B(t, q(t)),
\end{equation}
where $q(t)$ is a charged particle, $E$ is the electric field and $B$ is the magnetic field. 

In this paper, the analysis is conducted on an infinitely long and infinitely thin straight wire, which, without loss of generality, is fixed on the $z$-axis. It is assumed that the wire is electrically neutral, i.e., at any given time, each segment of the wire contains the same number of electrons as protons, and therefore the charge density $\rho$ is null. For a fixed period $T_1 > 0$, the Banach space $C^n([0, T_1]; \mathbb{R})$ is defined to contain $T_1$-periodic functions of class $C^n$. Assume that the wire carries a current $I$ of the form 
\begin{equation*}
    I=I_0 + kI_1(t), 
\end{equation*}
where $I_0 > 0$, $k \geq 0$ are constants and $I_1(t)$ satisfies 
\begin{equation}\label{eq05}
    I_1(t) \in C^2([0, T_{1}]; \mathbb{R}),\quad \int_{0}^{T_{1}} I_1(t)  dt = 0.
\end{equation}
Thus, the current density is expressed as a vector distribution $\vec{J} = (0, 0, J)$ where the scalar component $J$ is defined by 
\begin{equation*}
    J(f) = \int_{\mathbb{R}^2} [I_0 + kI_1(t)] f(t, 0, 0, z)\, dt\, dz,
\end{equation*}
for every $f \in \mathcal{D}(\mathbb{R}^4)$, where $\mathcal{D}(\mathbb{R}^4)$ denotes the space of test functions in $\mathbb{R}^4$. Furthermore, introducing the electromagnetic potential $\vec{A}$ and $\Phi$, then the electromagnetic field given by
\begin{equation}\label{eq02}
    B(t, q) = \nabla \times \vec{A}(t, q), \quad
    E(t, q) = -\frac{\partial}{\partial t} \vec{A}(t, q) - \nabla \Phi(t, q),
\end{equation}
solves Maxwell's equations uniquely for a specific distribution of charge and current. The symbols $\nabla$ and $\nabla \times$ represent the gradient and the rotational with respect to $q$, while $\partial_t$ is the time partial derivative. Note that these derivatives are in principle defined in a distributional sense. In this case, by employing the Lorentz gauge condition, Maxwell's equations are decoupled into two wave equations whose solutions are the \textit{delay potentials} $\Phi$ and $\vec{A}$, where the scalar potential $\Phi \equiv 0$ and the vector potential $\vec{A}(t, q)$ satisfies that 
\begin{equation*}
    \vec{A}(t, q) = A(t, r) \mathbf{e}_z, \quad A(t, r) = -\frac{\mu_0}{2\pi} \left[ a_0(r) + k a(t, r) \right], 
\end{equation*}
and
\begin{equation}\label{eq06}
    a_0(r) = I_0 \ln r, \quad a(t, r) = \int_{0}^{\infty} \frac{I_1[t, r, \tau]}{\sqrt{\tau^2 + r^2}} \, d\tau,
\end{equation}
where $ \mu_0 $ is the vacuum permeability constant for unitary light speed, $r$ denotes the radial variable in the $XY$-plane, $\mathbf{e}_z$ denotes the positive unit vector along the $z$ axis, and the bracket $[t, r, \tau] = \left( t - \sqrt{\tau^2 + r^2} \right)$ denotes the delay effect of the potential. It follows from this result that if $I_1(t) \in C^n([0, T_1]; \mathbb{R})$, then $a(t, r) \in C^n([0, T_1] \times \mathbb{R}^+; \mathbb{R})$. More mathematical details can be found in \cite{garzon2021motions}.

To study the Lorentz force equation \eqref{eq04}, Garzón in \cite{garzon2023periodic} introduced cylindrical coordinates \( q = (r \cos \theta, r \sin \theta, z) \) and denoted the relativistic radial, angular, and linear momenta by \( p_r \), \( L \), and \( p_z \), respectively, defined as
\begin{equation*}
\begin{aligned}
    &p_r = \frac{\dot{r}}{\sqrt{1 - \dot{r}^2 - r^2 \dot{\theta}^2 - \dot{z}^2}}, \quad L = \frac{r^2 \dot{\theta}}{\sqrt{1 - \dot{r}^2 - r^2 \dot{\theta}^2 - \dot{z}^2}}, \\
    &p_z = \frac{\dot{z}}{\sqrt{1 - \dot{r}^2 - r^2 \dot{\theta}^2 - \dot{z}^2}} + A(t, r).
\end{aligned}   
\end{equation*}
It is clear that the motion of a charged particle is described by its radial component, depending implicitly on $L$, $p_z$ and $k$. In particular, this allows for the reduction of \eqref{eq04} to a planar, single-degree-of-freedom Hamiltonian system, namely
\begin{equation}\label{eq07}
\begin{cases}
  & \dot{r}=\frac{p_{r} }{\sqrt{1+(p_{z} -A)^{2}+p_{r} ^{2} +L^{2} r^{-2}    } },   \\
  & \dot{p_{r} }=\frac{L^{2}r^{-3} + (p_{z} -A )\partial_{r} A}{\sqrt{1+(p_{z} -A)^{2}+p_{r} ^{2} +L^{2} r^{-2}    } },
\end{cases}
\end{equation}
which is Hamiltonian for 
\begin{equation*}
    H \left ( t,r, p_{r}\right ) = \sqrt{1+(p_{z} -A)^{2}+p_{r} ^{2} +L^{2} r^{-2}} .
\end{equation*}
Here $L$ and $p_z$ are first integrals of \eqref{eq04} for every $k \geq 0$. More mathematical details can be found in \cite{garzon2023periodic}. 

In the subsequent analysis, the following functions are defined for ease of representation: 
\begin{equation}\label{eq006}
    \begin{aligned}
         &F_1(r, p_r, k)= \frac{p_{r} }{\sqrt{1+(p_{z} -A)^{2}+p_{r} ^{2} +L^{2} r^{-2} } }, \\
         &F_2(r, p_r, k)= \frac{L^{2}r^{-3} + (p_{z} -A )\partial_{r} A}{\sqrt{1+(p_{z} -A)^{2}+p_{r} ^{2} +L^{2} r^{-2} } } .
    \end{aligned}
\end{equation}
Hence \eqref{eq07} becomes
\begin{equation}\label{eq017}
    \begin{cases}
         \dot{r} = F_1(r, p_r, k) ,\\
         \dot{p}_r = F_2(r, p_r, k) .
    \end{cases}
\end{equation}
Moreover, by the regular perturbation method, \eqref{eq017} can be expanded as
    \begin{equation}\label{eq18}
         \begin{cases}
         \dot{r} = F_1(r, p_r, 0) + k \cdot \frac{\partial F_1}{\partial k}(r, p_r, 0) + O(k^2) , \\
         \dot{p}_r = F_2(r, p_r, 0) + k \cdot \frac{\partial F_2}{\partial k}(r, p_r, 0) + O(k^2) ,
         \end{cases}
    \end{equation}
where the first-order perturbation coefficients are
    \begin{equation*}
             \frac{\partial F_1}{\partial k}(r, p_r, 0) = -\frac{\frac{\mu_0}{2\pi} p_r\left( p_z + \frac{\mu_0}{2\pi} I_0 \ln r \right) a(t, r)}{\left( 1 + \left( p_z + \frac{\mu_0}{2\pi} I_0 \ln r \right)^2 + p_r^2 + L^2 r^{-2} \right)^{\frac{3}{2}}} , 
    \end{equation*}
and    
    \begin{multline*}
            \frac{\partial F_2}{\partial k}(r, p_r, 0) = -\frac{\left( \frac{\mu_0}{2\pi} \right)^2 I_0 r^{-1} a(t, r) + \frac{\mu_0}{2\pi} \left( p_z + \frac{\mu_0}{2\pi} I_0 \ln r \right)  \partial_r a(t, r)}{\sqrt{1 + \left( p_z + \frac{\mu_0}{2\pi} I_0 \ln r \right)^2 + p_r^2 + L^2 r^{-2}}}  \\
            - \frac{\left[ L^2 r^{-3} - \frac{\mu_0}{2\pi} I_0 r^{-1} \left( p_z + \frac{\mu_0}{2\pi} I_0 \ln r \right) \right] \frac{\mu_0}{2\pi} \left( p_z + \frac{\mu_0}{2\pi} I_0 \ln r \right) a(t, r)}{\left( 1 + \left( p_z + \frac{\mu_0}{2\pi} I_0 \ln r \right)^2 + p_r^2 + L^2 r^{-2} \right)^{\frac{3}{2}}} .
    \end{multline*}
The unperturbed system of equation \eqref{eq18} is
\begin{equation}\label{eq09}
   \begin{cases}
  & \dot{r}=\frac{p_{r} }{\sqrt{1+(p_{z} +\frac{\mu_0}{2\pi}I_0 \ln r)^{2}+p_{r} ^{2} +L^{2} r^{-2}    } },   \\
  & \dot{p_{r} }=\frac{L^{2}r^{-3} -\frac{\mu_0}{2\pi}I_0 r^{-1} (p_{z} +\frac{\mu_0}{2\pi}I_0 \ln r )}{\sqrt{1+(p_{z} +\frac{\mu_0}{2\pi}I_0 \ln r)^{2}+p_{r} ^{2} +L^{2} r^{-2} } }.
\end{cases}
\end{equation}
System \eqref{eq09} is also a Hamiltonian system, whose Hamiltonian energy function is
\begin{equation}\label{eq10}
    H(r,p_r) = \sqrt{1+(p_{z}+\frac{\mu _{0} }{2 \pi } I_{0} \ln{r})^{2} +p_r^2 + L^{2} r^{-2}}. 
\end{equation}

A special class of solutions, referred to as radial periodic solutions and defined as follows, is investigated in this study:

\begin{definition}
    A solution $q(t) = \left(r(t), \theta(t), z(t)\right)$ of \eqref{eq04} with linear momentum $p_z$ and angular momentum $L$ is called a $(L, p_z)$-solution. Moreover, it is \textit{radially $T$-periodic} if
    \begin{equation*}
        r(t + T) = r(t),\quad \text{for every}\quad t \in \mathbb{R}.
    \end{equation*}
\end{definition}

\section{The dynamics of the unperturbed system}\label{sec4}
The unperturbed system \eqref{eq09}, as represented by equation \eqref{eq18} at $k = 0$, is investigated in this section. In this case, the electric field vanishes, leading to a magnetostatic regime. 

For any pair $\left ( L,p_{z} \right ) \in \mathbb{R}^+\times \mathbb{R}$, it can be shown that there exists a unique equilibrium $\left ( \bar{r},0 \right )$ in \eqref{eq09}, which satisfies the equality
\begin{equation}\label{eq11}
      p_{z} +\frac{\mu _{0} }{2 \pi } I_{0} \ln{\bar{r}}  =\frac{2 \pi }{\mu _{0} }\frac{L^{2} }{I_{0}} \bar{r} ^{-2}.
\end{equation}
In order to analyze the stability of the equilibrium $(\bar{r}, 0)$, consider the linearized system of \eqref{eq09} with the linearized matrix
\begin{equation*}
    \begin{bmatrix}
  0 & \frac{1}{\sqrt{1+\left ( p_{z} +\frac{\mu _{0}}{2\pi } I_{0} \ln{\bar{r} } \right ) ^{2} +L^{2}\bar{r}^{-2} } } \\
  \frac{-2L^{2}\bar{r}^{-4}-\left ( \frac{\mu _{0}}{2\pi } I_{0} \right )^{2}\bar{r}^{-2} }{\sqrt{1+\left ( p_{z} +\frac{\mu _{0}}{2\pi } I_{0} \ln{\bar{r} } \right ) ^{2} +L^{2}\bar{r}^{-2} } }  & 0
\end{bmatrix},
\end{equation*}
the characteristic equation associated with this matrix is
\begin{equation*}
  \lambda ^{2}- \frac{-2L^{2}\bar{r}^{-4}-\left ( \frac{\mu _{0}}{2\pi } I_{0} \right )^{2}\bar{r}^{-2} }{1+\left ( p_{z} +\frac{\mu _{0}}{2\pi } I_{0} \ln{\bar{r} } \right ) ^{2} +L^{2}\bar{r}^{-2} } =0 .
\end{equation*}
It is straightforward to deduce that the characteristic roots are a pair of pure imaginary numbers. It follows that the equilibrium $\left( \bar{r}, 0 \right)$ is identified as a center of equation \eqref{eq09}, and it is a global center since the system is Hamiltonian. Additionally, due to the vector fields corresponding to the functions on the right-hand side of equation \eqref{eq09} being symmetric with respect to the $r$-axis, the solution orbits exhibit symmetry about the $r$-axis as well. This symmetry implies that the dynamics of the system are invariant under reflections about the $r$-axis. In particular, in this context, every solution of the system \eqref{eq04} is radially periodic and there exists a unique radial equilibrium.

In the following, the relationship between the periods of the periodic orbits of equation \eqref{eq09} and their respective Hamiltonian energy is established. Given that equation \eqref{eq09} is Hamiltonian, the Hamiltonian function $H(r, p_r)$ remains constant along each orbit, which is a fundamental property of Hamiltonian systems. Notably, each non-trivial orbit intersects the $r$-axis at two distinct points, denoted as $(r_a, 0)$ and $(r_b, 0)$, where $0<r_a<\bar{r}<r_b$ and $\bar{r}$ was defined in \eqref{eq11}. For a given Hamiltonian energy $H$, $r_a$ and $r_b$ are given by
\begin{equation*}
    H^2 = 1+(p_{z}+\frac{\mu _{0} }{2 \pi } I_{0} \ln{r})^{2} + L^{2} r^{-2} . 
\end{equation*}
From equation \eqref{eq10}, it can be deduced that
\begin{equation*}
    p_r = \sqrt{H^2-\left(1+(p_{z}+\frac{\mu _{0} }{2 \pi } I_{0} \ln{r})^{2} + L^{2} r^{-2}\right)} , 
\end{equation*}
Substituting this equation into the system \eqref{eq09} yields that
\begin{equation*}
    \frac{\mathrm{d} r}{\mathrm{d} t}= \frac{\sqrt{H^{2} -1-(p_{z}+\frac{\mu _{0} }{2 \pi } I_{0} \ln{r})^{2} - L^{2} r^{-2}}}{H} .
\end{equation*}
By integrating this equation along the path from $r_a$ to $r_b$, the periods \(T \) of the orbits with Hamiltonian \(H \) can be determined as
\begin{equation}\label{eq12}
    T(H)=2 \int_{r_a}^{r_b} \frac{H}{\sqrt{H^{2} -1-(p_{z}+\frac{\mu _{0} }{2 \pi } I_{0} \ln{r})^{2} - L^{2} r^{-2}}}\mathrm{d}r.
\end{equation}
The focus of the subsequent research is to determine whether different Hamiltonian energy correspond to different periods. To achieve this, it is essential to examine the derivative of the period with respect to the Hamiltonian energy. Prior to this examination, a simplifying function is introduced to facilitate the analysis, defined as
\begin{equation}\label{eq013}
    f(r) = 1 + \left(p_z + \frac{\mu_0}{2\pi} I_0 \ln r\right)^2 + L^2 r^{-2}.
\end{equation}
Thus, \eqref{eq12} can be reformulated as
\begin{equation*}
    T(H) = 2 \int_{r_a}^{r_b} \frac{H}{\sqrt{H^2 - f(r)}} \, \mathrm{d}r.
\end{equation*}
The derived expression for the period \( T(H) \) offers a more tractable form for further computation. With \( \bar{r} \) identified as the minimum point for the function \( f(r) \), the value \( H_0^2 \) is designated to represent the minimum value of \( f(r) \) at this point. Consequently, \( T(H) \) can be articulated as
\begin{equation*}
    T(H) = 2 \int_{r_a}^{r_b} \frac{H/\sqrt{H^2 - H_0^2}}{\sqrt{1 - \frac{f(r) - H_0^2}{H^2 - H_0^2}}} \, \mathrm{d}r.
\end{equation*}
Introducing the variables
\begin{equation*}
    \begin{aligned}
        &y^2 = \frac{f(r) - H_0^2}{H^2 - H_0^2}, \\
        &z = y \sqrt{H^2 - H_0^2}.
    \end{aligned}
\end{equation*}
Then $T(H)$ can be rewritten  as
\begin{equation*}
    T(H) = 2 \int_{-1}^{1} \frac{H/\sqrt{H^2 - H_0^2}}{\sqrt{1 - y^2}}\frac{\mathrm{d} r}{\mathrm{d} z} \frac{\mathrm{d} z}{\mathrm{d} y}  \, \mathrm{d}y.
\end{equation*}
By the definition of $y$ and $z$, it follows that 
\begin{equation*}
    \frac{\mathrm{d} z}{\mathrm{d} y} = \sqrt{H^2 - H_0^2},
\end{equation*}
which leads to
\begin{equation}\label{eq13}
    T(H) = 2 \int_{-1}^{1} \frac{H}{\sqrt{1 - y^2}}\frac{\mathrm{d} r}{\mathrm{d} z}  \mathrm{d}y.
\end{equation}
Since
\begin{equation*}
    z^2=1 + \left(p_z + \frac{\mu_0}{2\pi} I_0 \ln r\right)^2 + L^2 r^{-2} -H_0^2,
\end{equation*}
we introduce the function $g(r)$, satisfying
\begin{equation*}
    g(r)=\frac{1}{2} z^2.
\end{equation*}
Moreover, for each $y \in (-1, 1)$, define the variables $r_1$ and $r_2$ that satisfy $r_1 < \bar{r} < r_2$ and
\begin{equation}\label{eq14}
    y^2(H^2 - H_0^2) = 2g(r),
\end{equation}
where $\bar{r}$ was defined in \eqref{eq11}. The function $g(r)$ reaches its minimum at $\bar{r}$ and is monotonic on each side of $\bar{r}$. Prior to the proof of the theorem, the following lemmas are established.
\begin{lemma}\label{lem1}
    Function
\begin{equation*}
    s(r)=\frac{(g'(r))^2 -2g''(r) g(r)}{(g'(r))^3},
\end{equation*}
increases monotonically for $r \in (0,\infty)$.
\end{lemma}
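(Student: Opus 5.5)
The first step is to note that $s$ is a derivative. A direct computation gives
\begin{equation*}
\frac{d}{dr}\left(\frac{g}{(g')^2}\right)=\frac{(g')^3-2g\,g'\,g''}{(g')^4}=\frac{(g')^2-2g''g}{(g')^3}=s(r),
\end{equation*}
so Lemma \ref{lem1} says exactly that $g/(g')^2$ is convex on $(0,\infty)$. This is Chicone's criterion for monotone period functions. I would prove it by showing $s'(r)>0$. Differentiating, and using $\frac{d}{dr}\bigl((g')^2-2gg''\bigr)=-2gg'''$, gives
\begin{equation*}
s'(r)=\frac{6g\,(g'')^2-2g\,g'\,g'''-3(g')^2g''}{(g')^4}=:\frac{Q(r)}{(g')^4}.
\end{equation*}
It therefore suffices to prove $Q(r)>0$ for $r\neq\bar r$.

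Next I would deal with the point $r=\bar r$, where $g'$ vanishes. Since $g(\bar r)=0$, $g'(\bar r)=0$ and $g''(\bar r)>0$ (the linearization in Section \ref{sec4} shows the minimum is nondegenerate), a Taylor expansion shows that the singularities of $s$ cancel. Hence $s$ extends to a smooth function across $\bar r$, and positivity of $s'$ on $(0,\bar r)\cup(\bar r,\infty)$ gives monotonicity on the whole half-line by continuity.

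To compute $Q$ I would pass to $x=\ln r$, where $g$ becomes a quadratic plus an exponential. Write $c=\frac{\mu_0}{2\pi}I_0$, $w=p_z+cx$, and $G(x)=g(e^x)=\tfrac12\bigl(w^2+L^2e^{-2x}+1-H_0^2\bigr)$. Then
\begin{equation*}
G'=cw-L^2e^{-2x},\quad G''=c^2+2L^2e^{-2x},\quad G'''=-4L^2e^{-2x}.
\end{equation*}
Using $g'=G'/r$, $g''=(G''-G')/r^2$ and $g'''=(G'''-3G''+2G')/r^3$, the expression $r^6Q$ becomes an explicit expression in $G,G',G'',G'''$. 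After substitution it is a polynomial in $w$ and $e^{-2x}$, together with the constant $1-H_0^2$. That constant can be eliminated using \eqref{eq11}: at $\bar x=\ln\bar r$ one has $c\bar w=L^2\bar r^{-2}$ and $G(\bar x)=0$.

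The main obstacle is proving that this final expression is positive. I would write $Q=g\bigl(6(g'')^2-2g'g'''\bigr)-3(g')^2g''$ and use a lower bound for $g$ in terms of $g'$. Since $g$ vanishes at $\bar r$, one can write $g(r)=\int_{\bar r}^r g'$. The aim is an estimate of the form $g\ge (g')^2/(2M)$ on each side of $\bar r$, where $M$ bounds $g''$ on the interval between $\bar r$ and $r$. Such an estimate reduces the claim to the pointwise inequality $6(g'')^2-2g'g'''\ge 3Mg''$, which can then be checked with the explicit formulas above. If this comparison is too lossy, I would instead regard $r^6Q$, after substitution, as a function of the single variable $x$. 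I would then show it is positive by verifying that it vanishes to the appropriate order at $\bar x$ and has a sign-definite higher derivative on each side, exploiting the log-convex structure $G''>0$ and $G'''<0$.
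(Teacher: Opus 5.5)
Your preliminary steps are fine. The identity $s=\bigl(g/(g')^2\bigr)'$ is correct, and your numerator $Q=6g(g'')^2-2gg'g'''-3(g')^2g''$ is the paper's. Your treatment of $r=\bar r$, where $g'$ vanishes and $s$ extends continuously with $s(\bar r)=-g'''(\bar r)/(3g''(\bar r)^2)$, is more careful than the paper, which simply calls $(g')^4$ positive. However, the whole content of the lemma is the inequality $Q\ge 0$, and you never prove it; moreover, the route you put first cannot work.

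\textbf{The first route fails.} There is a slip in the reduction. From $g\ge (g')^2/(2M)$ you get $Q\ge (g')^2\bigl(A/(2M)-3g''\bigr)$ with $A=6(g'')^2-2g'g'''$, so you need $A\ge 6Mg''$, not $3Mg''$. With $3M$ the condition looks harmless near $\bar r$, which hides the problem; with the correct constant it fails for every $r$ close to $\bar r$. Write $u=r-\bar r$, $\alpha=g''(\bar r)$, $\beta=g'''(\bar r)$, $\gamma=g^{(4)}(\bar r)$. A Taylor expansion gives
\begin{equation*}
Q=\tfrac{\alpha}{12}\bigl(5\beta^2-3\alpha\gamma\bigr)u^4+O(u^5).
\end{equation*}
So $Q$ vanishes to fourth order; it is identically zero when $g$ is quadratic, the isochronous case. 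By contrast, the slack $g-(g')^2/(2M)$ has exact order $|\beta|\,|u|^3$ on both sides of $\bar r$, and here $\beta\neq 0$. In the paper's normalization $G(x)=(\ln x+a)^2+ax^{-2}-a^2-a$, one has $G'''(1)=-6-20a<0$. Multiplied by $A\approx 6\alpha^2$, the discarded amount is of order $|u|^3$ and dominates the true size of $Q$, so your lower bound is negative near $\bar r$. Concretely, $A\ge 6Mg''$ reduces there to $-2g'g'''\ge 6g''(M-g'')$, which is false for small $u$ of either sign.

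\textbf{The fallback is only a plan.} Treating $r^6Q$ as a function of $\ln r$ and checking its order of vanishing at $\bar x$ and the sign of a higher derivative is not yet an argument. The expression still carries the parameters $L,p_z,I_0$. Also, the qualitative facts $G''>0$, $G'''<0$ do not by themselves force $Q\ge 0$: positivity depends on quantitative relations such as $5\beta^2>3\alpha\gamma$ at the center.

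\textbf{The missing idea is the paper's reduction,} which is what makes a one-variable check possible.
\begin{itemize}
\item The substitution $x=e^{2\pi p_z/(\mu_0 I_0)}r$, followed by rescaling by the critical point $x_0$, turns $g$ into a positive multiple of $G(x)=(\ln x+a)^2+ax^{-2}-a^2-a$. This has the single parameter $a=\ln x_0>0$.
\item In the variable $\ln x$, a positive multiple of the numerator is then a cubic $C_3(x)a^3+C_2(x)a^2+C_1(x)a+C_0(x)$. Its coefficients are parameter-free exponential polynomials.
\item Each $C_i$ vanishes to fourth order at $0$ and is shown nonnegative by repeated differentiation.
\end{itemize}
That last step is exactly your \emph{order of vanishing plus sign of a higher derivative} idea. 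It only becomes checkable after the parameters are reduced to $a$ and the expression is split by powers of $a$; without that step, the proof of the lemma is not there.
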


\begin{lemma}\label{lem3}
    Let $T_0$ be the minimum period of the periodic solutions of \eqref{eq09}, then 
    \begin{equation*}
        T_0=\sqrt{\frac{2 \pi^2 H_0^2}{\left(\frac{\mu_0}{2\pi}I_0-p_z-\frac{\mu_0}{2\pi}I_0\ln \bar{r}\right)\frac{\mu_0}{2\pi}I_0 \bar{r}^{-2}+3L^2 \bar{r}^{-4}}}.
    \end{equation*}
\end{lemma}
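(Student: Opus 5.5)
The plan is to identify $T_0$ with the limit of $T(H)$ as $H\to H_0^{+}$, that is, with the period of infinitesimally small oscillations about the global center $(\bar r,0)$, and then to evaluate this limit from the representation \eqref{eq13}. Lemma~\ref{lem1} is the ingredient behind the monotonicity of $T(H)$: differentiating \eqref{eq13} in $H$ and using the increase of $s(r)$ shows $T'(H)>0$. I take this monotonicity as the standing result of this section, so that
\[
T_0=\inf_{H>H_0}T(H)=\lim_{H\to H_0^+}T(H).
\]

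\textbf{Step 1 (expand $g$ near $\bar r$).} Since $g(r)=\tfrac12\bigl(f(r)-H_0^2\bigr)$ and $f'(\bar r)=0$, $f(\bar r)=H_0^2$, we have $g(\bar r)=g'(\bar r)=0$ and
\[
g''(\bar r)=\tfrac12 f''(\bar r).
\]
Differentiating \eqref{eq013} twice and simplifying with $c=\frac{\mu_0}{2\pi}I_0$ gives
\[
\tfrac12 f''(\bar r)=\Bigl(c-p_z-c\ln\bar r\Bigr)c\,\bar r^{-2}+3L^2\bar r^{-4}.
\]
This is exactly the quantity in the denominator of the claimed formula. Its positivity, which follows from \eqref{eq11}, is what makes $\bar r$ a nondegenerate minimum.

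\textbf{Step 2 (limit of $dr/dz$).} On each branch $r_1<\bar r<r_2$ defined by \eqref{eq14}, we have $z=\pm\sqrt{2g(r)}$. Near $\bar r$ one gets $z\approx \sqrt{g''(\bar r)}\,(r-\bar r)$, so
\[
\frac{dr}{dz}\to \frac{1}{\sqrt{g''(\bar r)}}
\]
uniformly in $y\in[-1,1]$ as $H\to H_0^+$. Indeed, $|z|\le \sqrt{H^2-H_0^2}\to 0$, and $dr/dz=z/g'(r)$ extends continuously to $z=0$ by a Taylor or l'Hôpital argument.

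\textbf{Step 3 (pass to the limit).} The integrand in \eqref{eq13} is dominated by $C/\sqrt{1-y^2}$, which is integrable. Dominated convergence then gives
\[
\lim_{H\to H_0^+}T(H)=\frac{2H_0}{\sqrt{g''(\bar r)}}\int_{-1}^{1}\frac{dy}{\sqrt{1-y^2}}=\frac{2\pi H_0}{\sqrt{g''(\bar r)}}.
\]
Combining this with Step 1 and squaring yields the formula for $T_0$.

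As an independent check I would compare with the linearization at $(\bar r,0)$. There $\dot p_r=-\tfrac12 f'(r)/H$, so $\omega^2=f''(\bar r)/(2H_0^2)$ and $2\pi/\omega$ must agree with the value from Step 3.

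\textbf{Main obstacle.} The hardest part is rigor and bookkeeping rather than any single idea. First, one must justify the uniform convergence of $dr/dz$ on both branches, in particular near $y=\pm1$. Second, the numerical constants must be tracked carefully: the factor $2$ in \eqref{eq13} coming from the two halves of the orbit, and the $\tfrac12$ in $g=\tfrac12 z^2$. Doing this yields
\[
T_0^2=\frac{4\pi^2H_0^2}{g''(\bar r)}=\frac{4\pi^2H_0^2}{\bigl(c-p_z-c\ln\bar r\bigr)c\,\bar r^{-2}+3L^2\bar r^{-4}},
\]
whose numerator is $4\pi^2H_0^2$, whereas the statement has $2\pi^2H_0^2$. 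I have not resolved this discrepancy. I would recheck Step~3 and the Step~1 simplification independently, and compare against the linearization check and the paper's own linearized matrix, before settling which constant is correct.
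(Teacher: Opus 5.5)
Your argument follows the paper's: let $H\to H_0^+$ in \eqref{eq13}, use $\frac{dr}{dz}=z/g'(r)$ on the two branches $r_1<\bar r<r_2$ of \eqref{eq14}, and integrate $1/\sqrt{1-y^2}$ to get $\pi$. You also supply two things the paper omits. The first is a justification for exchanging limit and integral. The second is the identification of $T_0$ with $\lim_{H\to H_0^+}T(H)$ via monotonicity; strictly this value is an infimum that is never attained. The obstacle you flag near $y=\pm1$ is not real. Write $2g(r)=(r-\bar r)^2h(r)$ with $h$ smooth and $h(\bar r)=g''(\bar r)>0$. Then $r\mapsto z$ is a smooth local diffeomorphism at $\bar r$ with $dz/dr=\sqrt{g''(\bar r)}$ there. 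So $dr/dz$ is a continuous function of $z$ alone, and the convergence is uniform on $|z|\le\sqrt{H^2-H_0^2}$, endpoints included. More importantly, your constant is the correct one, and you should resolve the discrepancy against the statement: $T_0=2\pi H_0/\sqrt{g''(\bar r)}$, that is, $T_0^2=4\pi^2H_0^2/\bigl((c-p_z-c\ln\bar r)c\,\bar r^{-2}+3L^2\bar r^{-4}\bigr)$ with $c=\frac{\mu_0}{2\pi}I_0$.

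The $2\pi^2$ in the statement comes from two slips in the paper's proof. First, when passing to the limit, the paper rewrites $\sqrt{2g(r_1)}$ as $2\sqrt{g(r_1)}$. It therefore sends $\sqrt{2g(r_i)}/|g'(r_i)|$ to $\sqrt{2/g''(\bar r)}$ instead of $1/\sqrt{g''(\bar r)}$, and obtains $\frac12T(H_0)=\sqrt2\,\pi H_0/\sqrt{g''(\bar r)}$. Second, the square of this half-period, $2\pi^2H_0^2/g''(\bar r)$, is then recorded as $T_0^2$, so the factor $2$ in \eqref{eq13} is lost. The net effect is that the stated $T_0$ is too small by a factor $\sqrt2$.

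Your linearization check confirms this using the paper's own data. By \eqref{eq11}, $c\,\bar r^{-2}(p_z+c\ln\bar r)=L^2\bar r^{-4}$, so $g''(\bar r)=c^2\bar r^{-2}+2L^2\bar r^{-4}$. This equals $-H_0^2$ times the product of the off-diagonal entries of the paper's linearized matrix. Hence $\omega^2=g''(\bar r)/H_0^2$ and $2\pi/\omega=2\pi H_0/\sqrt{g''(\bar r)}$.

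The correction matters downstream. $T_0$ serves as the threshold $T_1>T_0$ in Theorem~\ref{thm2}. With the stated value, every $T_1\in\bigl(\sqrt2\,\pi H_0/\sqrt{g''(\bar r)},\,2\pi H_0/\sqrt{g''(\bar r)}\bigr]$ would be admitted, yet for such $T_1$ the unperturbed system has no orbit of period $T_1$.
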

\begin{proof}
For each $y \in (-1, 1)$, let $r_1$, $r_2$ satisfy $r_1< \bar{r}< r_2$  and \eqref{eq14}. It follows from the definition of the function $g(r)$ that
\begin{equation*}
    \frac{\mathrm{d} r}{\mathrm{d} z} = \frac{z}{g'(r)}.
\end{equation*}
Consequently,
\[
\begin{aligned}
    \frac{1}{2} T(H) & = \int_{-1}^{1}  \frac{1}{\sqrt{1-y^2}} \frac{\mathrm{d} r}{\mathrm{d} z}  H \, dy \\
    &=\int_{-1}^{0} \frac{1}{\sqrt{1-y^2}} \frac{-\sqrt{2g(r_1)}}{g'(r_1)} H \, dy + \int_{0}^{1} \frac{1}{\sqrt{1-y^2}} \frac{\sqrt{2g(r_2)}}{g'(r_2)} H \, dy.
\end{aligned}
\]
It leads that
\[
\begin{aligned}
\frac{1}{2} T(H_0)=& \lim_{H \to H_0} \left(\int_{-1}^{0} \frac{-\sqrt{2g(r_1)}}{g'(r_1) \sqrt{1-y^2}} H \, dy + \int_{0}^{1} \frac{\sqrt{2g(r_2)}}{g'(r_2) \sqrt{1-y^2}} H  dy \right) \\
=&\int_{-1}^{0} \lim_{H \to H_0} \left( \frac{H}{\sqrt{1-y^2}}  \frac{-2\sqrt{g(r_1)}}{g'(r_1)} \right)  dy \\
&+ \int_{0}^{1}\lim_{H \to H_0} \left(\frac{H}{\sqrt{1-y^2}}  \frac{\sqrt{2g(r_2)}}{g'(r_2)} \right) dy \\
=& \int_{-1}^{0} \frac{H_0}{\sqrt{1-y^2}} \sqrt{\frac{2}{g''(\bar{r})}} \, dy + \int_{0}^{1} \frac{H_0}{\sqrt{1-y^2}} \sqrt{\frac{2}{g''(\bar{r})}} \, dy \\
=& \frac{\pi}{2} H_0 \sqrt{\frac{2}{g''(\bar{r})}} + \frac{\pi}{2} H_0 \sqrt{\frac{2}{g''(\bar{r})}} \\
=& \sqrt{2} \pi H_0 \sqrt{\frac{1}{g''(\bar{r})}}.
\end{aligned}
\]
\end{proof}

\begin{lemma}\label{lem4}
    Let $T(H)$ be defined as \eqref{eq12}, then 
    \begin{equation*}
        T(H) \to +\infty \quad \text{when} \quad H \to +\infty.
    \end{equation*}
\end{lemma}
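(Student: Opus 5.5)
Since the integrand in \eqref{eq12} is positive, the plan is to bound $T(H)$ from below by an elementary quantity that diverges. From the constraint $H^2 - f(r)\ge 0$ we have $\sqrt{H^2 - f(r)} \le H$, so
\begin{equation*}
    T(H) = 2\int_{r_a}^{r_b} \frac{H}{\sqrt{H^2-f(r)}}\,\mathrm{d}r \;\ge\; 2\int_{r_a}^{r_b} 1\,\mathrm{d}r = 2\,(r_b - r_a),
\end{equation*}
where $f$ is the function in \eqref{eq013}. This is just the relativistic speed bound $|\dot r|\le 1$ in integrated form. It therefore suffices to show that $r_b(H)-r_a(H)\to+\infty$ as $H\to+\infty$.

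Here $r_a<\bar r<r_b$ are the two roots of $f(r)=H^2$, and $f$ is strictly decreasing on $(0,\bar r)$ and strictly increasing on $(\bar r,\infty)$ because $\bar r$ is its unique minimum. Since $r_a\in(0,\bar r)$, we get $r_b-r_a\ge r_b-\bar r$, so we only need $r_b(H)\to+\infty$. For this I would use that $f(r)\to+\infty$ as $r\to+\infty$, which holds because $(p_z+\frac{\mu_0}{2\pi}I_0\ln r)^2\to\infty$. Fix any $R>\bar r$. Once $H^2>f(R)$, monotonicity of $f$ on $(\bar r,\infty)$ forces $r_b>R$. As $R$ is arbitrary, $r_b\to\infty$.

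One can make this quantitative. From $f(r_b)=H^2$ we have $|p_z+\frac{\mu_0}{2\pi}I_0\ln r_b|\approx H$ for large $H$, hence $r_b\sim \exp\!\bigl(\tfrac{2\pi}{\mu_0 I_0}(H-p_z)\bigr)$. This shows $T(H)$ grows at least exponentially in $H$, although that rate is not needed for the statement.

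No step here should be a serious obstacle. The only point that needs care is that the integral in \eqref{eq12} is improper at both endpoints. Since the integrand is positive, the comparison with the constant $1$ remains valid for improper integrals, so nothing more is required beyond monotonicity of $f$ on each side of $\bar r$. That monotonicity was already stated for $g=\frac12(f-H_0^2)$ just before Lemma \ref{lem1}.
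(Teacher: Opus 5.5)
Your proof is correct and follows the paper's argument. The paper also uses the bound $H/\sqrt{H^2-f(r)}>1$ to get $T(H)>2(r_b-r_a)$ and then simply appeals to the definition of $r_a,r_b$; your argument that $r_b\to\infty$ (via $f(r)\to\infty$ as $r\to\infty$ and the monotonicity of $f$ on $(\bar r,\infty)$) makes explicit the step the paper leaves implicit.
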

\begin{proof}
It can be deduced from \eqref{eq12} that
\begin{equation*}
\begin{aligned}
    T(H)=&2 \int_{r_a}^{r_b} \frac{H}{\sqrt{H^{2} -1-(p_{z}+\frac{\mu _{0} }{2 \pi } I_{0} \ln{r})^{2} - L^{2} r^{-2}}}\mathrm{d}r \\
    >&2(r_b-r_a).
\end{aligned}
\end{equation*}
Therefore, according to the definition of $r_a$ and $r_b$, $T(H)$ tends to infinity as $H$ tends to infinity.    
\end{proof}

Henceforth, it is straightforward that
\begin{theorem}\label{thm1}
    Let $I_0$ be positive, $I_1(t) \in C^2([0, T_1]; \mathbb{R})$ satisfying \eqref{eq05}. For any pair $(L, p_z) \in \mathbb{R^+} \times \mathbb{R}$, the periodic function $T(H)$ increases strictly monotonically with the Hamiltonian energy $H$, and $\lim_{H \to \infty}T(H)=\infty$.
\end{theorem}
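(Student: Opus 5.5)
The limit statement $\lim_{H\to\infty}T(H)=\infty$ is exactly Lemma~\ref{lem4}, so I only need to prove strict monotonicity. I will work with the representation \eqref{eq13}, split at $y=0$ as in the proof of Lemma~\ref{lem3}:
\begin{equation*}
\tfrac12 T(H)=H\int_{-1}^{1}\frac{1}{\sqrt{1-y^2}}\,\frac{z}{g'(r(z))}\,dy,\qquad z=y\sqrt{H^2-H_0^2}.
\end{equation*}
Here $r(z)$ is the branch $r_1$ for $y<0$ and $r_2$ for $y>0$. Since $g(r)=\tfrac12 z^2$, the map $z\mapsto r(z)$ is a smooth diffeomorphism near $\bar r$ with $dr/dz=z/g'(r)$. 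This quantity is positive on both branches and tends to $1/\sqrt{g''(\bar r)}$ at $z=0$, as in Lemma~\ref{lem3}.

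Setting $h=\sqrt{H^2-H_0^2}$, I write $T(H)=2H\,\Phi(h)$ with
\begin{equation*}
\Phi(h)=\int_{-1}^1\frac{\varphi(yh)}{\sqrt{1-y^2}}\,dy,\qquad \varphi(z)=\frac{dr}{dz}.
\end{equation*}
The factor $H$ is strictly increasing and $\Phi>0$. It therefore suffices to show that $h\mapsto\Phi(h)$ is nondecreasing, since $h$ is increasing in $H$. Differentiating under the integral sign gives
\begin{equation*}
\Phi'(h)=\int_{-1}^1\frac{y\,\varphi'(yh)}{\sqrt{1-y^2}}\,dy,
\end{equation*}
so the crux is a sign condition on $\varphi'$.

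To get it, I compute $\varphi'(z)=\frac{d}{dz}\bigl(z/g'(r)\bigr)$. Using $dr/dz=z/g'$ and $z^2=2g$, one finds
\begin{equation*}
\varphi'(z)=\frac{1}{g'}-\frac{z^2g''}{(g')^3}=\frac{(g')^2-2gg''}{(g')^3}=s(r(z)).
\end{equation*}
This is precisely the function of Lemma~\ref{lem1}. By that lemma $s$ is increasing in $r$, and $r(z)$ is increasing in $z$, so $\varphi'$ is increasing in $z$ and hence $\varphi$ is convex.

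Pairing $y$ with $-y$ then gives
\begin{equation*}
\Phi'(h)=\int_0^1\frac{y\,[\varphi'(yh)-\varphi'(-yh)]}{\sqrt{1-y^2}}\,dy\ \ge 0.
\end{equation*}
It follows that $\Phi$ is nondecreasing and $T(H)=2H\Phi(h)$ is strictly increasing. Strictness comes either from the factor $H$ or from $\varphi'$ being strictly increasing, which is supplied by Lemma~\ref{lem1}.

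I expect the main obstacle to be the technical justification rather than the algebra. One must check that $\varphi$ is $C^1$ across $z=0$ (the point where $g'(\bar r)=0$), so that $s(r(z))$ extends continuously there. Near $\bar r$, $s$ has a removable-looking $0/0$ form that needs a Taylor expansion of $g$ to third order. One must also justify differentiating under the integral sign near $y=\pm1$, where the weight $1/\sqrt{1-y^2}$ is integrable and $\varphi'$ is bounded on compact $z$-ranges.

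I would handle the first point by noting that $g$ is analytic in $r$ with $g''(\bar r)>0$, as established in the stability computation. Then $r(z)$ is analytic in $z$ by the Morse lemma, so $\varphi$ is analytic, and the identity $\varphi'=s\circ r$ holds for $z\neq0$ and extends by continuity.
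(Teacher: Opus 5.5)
Your proposal is correct and follows essentially the same route as the paper. Differentiating $T=2H\Phi(h)$ by the product rule gives exactly the paper's integrand $\frac{dr}{dz}+\frac{H^2}{H^2-H_0^2}\,z\,\frac{d^2r}{dz^2}$: the first term is positive, and the second, after pairing $r_1$ with $r_2$ at $\pm y$, is nonnegative by Lemma~\ref{lem1}; the limit is Lemma~\ref{lem4}, as in the paper. Your identification $\varphi'=s\circ r$ (convexity of $dr/dz$) and your justification of regularity at $z=0$ and of differentiation under the integral simply make explicit what the paper leaves implicit.
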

\begin{proof}
Differentiating $T(H)$ with respect to $H$ in \eqref{eq13} yields
\begin{equation}\label{eq16}
    \frac{\mathrm{d}T(H)}{\mathrm{d}H} = 2 \int_{-1}^{1} \frac{1}{\sqrt{1 - y^2}} \left( z \frac{H^2}{H^2 - H_0^2}  \frac{\mathrm{d}^2r}{\mathrm{d}z^2} +  \frac{\mathrm{d}r}{\mathrm{d}z} \right) \mathrm{d}y.
\end{equation}
According to the definition of $g(r)$, it follows that
\begin{equation*}
    z\mathrm{d}z = g'(r)\mathrm{d}r.
\end{equation*}
This leads to 
\begin{equation*}
    \mathrm{d}z = g'(r)\mathrm{d}\left(\frac{\mathrm{d}r}{\mathrm{d}z}\right)+g''(r) \mathrm{d}r\left(\frac{\mathrm{d}r}{\mathrm{d}z} \right).
\end{equation*}
Dividing both sides of the equation by $\mathrm{d}z$ yields
\begin{equation*}
    1 = g'(r)\frac{\mathrm{d}^2r}{\mathrm{d}z^2}+g''(r)\left(\frac{\mathrm{d}r}{\mathrm{d}z} \right)^2.
\end{equation*}
Given that
\begin{equation*}
    \frac{\mathrm{d} r}{\mathrm{d} z} = \frac{z}{g'(r)},
\end{equation*}
it follows that
\begin{equation*}
    \frac{\mathrm{d}^2r}{\mathrm{d}z^2} = \frac{1 - g''(r) \left( \frac{\mathrm{d}r}{\mathrm{d}z} \right)^2}{g'(r)}.
\end{equation*}
Consequently, some terms in equation\eqref{eq16} become
\begin{equation*}
\begin{aligned}
    z \frac{H^2}{H^2 - H_0^2}  \frac{\mathrm{d}^2r}{\mathrm{d}z^2} +  \frac{\mathrm{d}r}{\mathrm{d}z}=& z \frac{H^2}{H^2 - H_0^2}  \frac{1 - g''(r) \left( \frac{\mathrm{d}r}{\mathrm{d}z} \right)^2}{g'(r)} +  \frac{z}{g'(r)}\\
    =& \frac{\mathrm{d}r}{\mathrm{d}z}\left(\frac{H^2}{H^2 - H_0^2} \left( 1 - g''(r) \left( \frac{\mathrm{d}r}{\mathrm{d}z} \right)^2 \right)+  1\right).
\end{aligned} 
\end{equation*}
Substituting this into \eqref{eq16}, it is derived that
\begin{equation*}
\begin{aligned}
    \frac{\mathrm{d}T(H)}{\mathrm{d}H} =& 2 \int_{-1}^{1} \frac{1}{\sqrt{1 - y^2}} \frac{\mathrm{d}r}{\mathrm{d}z}\left(\frac{H^2}{H^2 - H_0^2} \left( 1 - g''(r) \left( \frac{\mathrm{d}r}{\mathrm{d}z} \right)^2 \right)+  1\right) \mathrm{d}y \\
    =&2\int_{-1}^{0} \frac{1}{\sqrt{1 - y^2}} \frac{z}{g'(r)}\left(\frac{H^2 \left( 1 - g''(r_1) \left( \frac{\mathrm{d}r}{\mathrm{d}z} \right)^2 \right)}{H^2 - H_0^2} +  1\right) \mathrm{d}y \\
    &+2\int_{0}^{1} \frac{1}{\sqrt{1 - y^2}} \frac{z}{g'(r)}\left(\frac{H^2 \left( 1 - g''(r_2) \left( \frac{\mathrm{d}r}{\mathrm{d}z} \right)^2 \right)}{H^2 - H_0^2} +  1\right) \mathrm{d}y \\
    =&2\int_{0}^{1} \frac{1}{\sqrt{1 - y^2}} \frac{-y \sqrt{H^2 - H_0^2}}{g'(r)}\left(\frac{H^2 \left( 1 - g''(r_1) \left( \frac{\mathrm{d}r}{\mathrm{d}z} \right)^2 \right)}{H^2 - H_0^2} +  1\right) \mathrm{d}y \\
    &+2\int_{0}^{1} \frac{1}{\sqrt{1 - y^2}} \frac{y \sqrt{H^2 - H_0^2}}{g'(r)}\left(\frac{H^2 \left( 1 - g''(r_2) \left( \frac{\mathrm{d}r}{\mathrm{d}z} \right)^2 \right)}{H^2 - H_0^2} +  1\right) \mathrm{d}y \\    
    =&2\int_{0}^{1} \frac{1}{\sqrt{1 - y^2}} \frac{-\sqrt{2g(r_1)}}{g'(r_1)}\left(\frac{H^2 \left( 1 - g''(r_1) \frac{2g(r_1)}{\left(g'(r_1)\right)^2} \right)}{H^2 - H_0^2} +  1\right) \mathrm{d}y \\
    &+2\int_{0}^{1} \frac{1}{\sqrt{1 - y^2}} \frac{\sqrt{2g(r_2)}}{g'(r_2)}\left(\frac{H^2 \left( 1 - g''(r_2) \frac{2g(r_2)}{\left(g'(r_2)\right)^2} \right)}{H^2 - H_0^2} +  1\right) \mathrm{d}y.
\end{aligned} 
\end{equation*}
The following objective is to demonstrate that this integral above is positive, thereby obtaining the monotonicity of the period $T$ with respect to the Hamiltonian $H$. It suffices to show that
\begin{multline*}
    \frac{-\sqrt{2g(r_1)}}{g'(r_1)} \left( \frac{H^2}{H^2 - H_0^2} \left( 1 - g''(r_1) \frac{2g(r_1)}{(g'(r_1))^2} \right) + 1 \right)  \\+ \frac{\sqrt{2g(r_2)}}{g'(r_2)} \left( \frac{H^2}{H^2 - H_0^2} \left( 1 - g''(r_2)  \frac{2g(r_2)}{(g'(r_2))^2} \right) + 1 \right) > 0.
\end{multline*}
With the application of Lemma \ref{lem1} and
\begin{equation*}
    s(\bar{r})=-\frac{g'''(\bar{r})}{3(g''(\bar{r}))^2},
\end{equation*}
it can be deduced that
\begin{equation*}
    -\frac{1}{g'(r_1)} \left( 1 - 2\frac{g''(r_1) g(r_1)}{(g'(r_1))^2} \right)  + \frac{1}{g'(r_2)} \left( 1 - 2\frac{g''(r_2) g(r_2)}{(g'(r_2))^2} \right) \geq 0.
\end{equation*}
Note that
\begin{equation*}
    \frac{-1}{g'(r_1)}+ \frac{1}{g'(r_2)}>0,
\end{equation*}
this assertion is substantiated. Combined with Lemma \ref{lem4}, it is clear that the theorem holds.
\end{proof}

\section{The dynamics of the perturbed system}\label{sec5}

In this section, the existence of the radially periodic solutions is analyzed by using the Melnikov method. See \cite{guckenheimer2013nonlinear} \cite{wigginsintroduction} \cite{yagasaki1996melnikov} for more details.

Before presenting the result, an assumption is made regarding the function $a(t,r)$. It is recognized that $a(t,r)$ is time-periodic with period $T_1$. Consequently, it is reasonable to assume that $a(t,r)$ satisfies \eqref{eq06} and
\begin{equation}\label{eq08}
    a(t,r) =D(r) sin(\omega_1t),
\end{equation}
where $\omega_1=\frac{2\pi}{T_1}$ and $D(r) \in C^2(\mathbb{R^+}; \mathbb{R})$. Subsequently, the following result is established:
\begin{theorem}\label{thm2}
    Let $I_0$ be positive, $I_1(t) \in C^2([0, T_1]; \mathbb{R})$ satisfying \eqref{eq05} and $T_1 > T_0$. Take $0<k \ll 1$ and $a(t,r)$ satisfies \eqref{eq08}. For any pair $(L, p_z) \in \mathbb{R^+} \times \mathbb{R}$, there exists at most countable radially periodic solutions of \eqref{eq04} with periods being integer multiples of $T_1$. Moreover, for each such period, there is at most one corresponding periodic solution.
\end{theorem}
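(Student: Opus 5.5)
We will use the subharmonic Melnikov method for periodically perturbed planar Hamiltonian systems, applied to \eqref{eq18} written as $\dot x = J\nabla H(x) + k\,G(t,x) + O(k^2)$ with $x=(r,p_r)$. Here $G=(\partial_k F_1,\partial_k F_2)(t,r,p_r,0)$ is $T_1$-periodic in $t$ because of \eqref{eq08}.

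\textbf{Step 1 (resonant orbits).} By Theorem \ref{thm1}, the period map $H\mapsto T(H)$ of the unperturbed system \eqref{eq09} is strictly increasing on $(H_0,\infty)$. Its range is $(T_0,\infty)$, using Lemma \ref{lem3} for the infimum and Lemma \ref{lem4} for unboundedness. A periodic solution of the perturbed system whose period is commensurate with $T_1$ can bifurcate, for small $k$, only from an unperturbed orbit satisfying the resonance condition $T(H)=\tfrac{m}{n}T_1$. The statement restricts to periods that are integer multiples of $T_1$, which corresponds to the resonances $T(H)=nT_1$ with $n\ge 1$ (the $m=1$ case). Since $T_1>T_0$, every $nT_1$ lies in $(T_0,\infty)$. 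Strict monotonicity then gives, for each $n$, exactly one resonant energy $H_n$ and hence one resonant orbit $\Gamma_n=\{(r^{n}(t),p_r^{n}(t))\}$. The set of candidate resonant orbits is therefore countable, indexed by $n$.

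\textbf{Step 2 (subharmonic Melnikov function).} For each $n$ we define
\[
M_n(t_0)=\int_0^{nT_1}\Big(F_1(r^{n},p_r^{n},0)\,\partial_kF_2(t+t_0,r^{n},p_r^{n},0)-F_2(r^{n},p_r^{n},0)\,\partial_kF_1(t+t_0,r^{n},p_r^{n},0)\Big)\,dt .
\]
The standard subharmonic Melnikov theorem (Guckenheimer--Holmes, Yagasaki) gives the needed implication. It requires a nondegeneracy (twist) condition $T'(H_n)\neq 0$, which Theorem \ref{thm1} supplies. Under it, a simple zero of $M_n$ yields an $nT_1$-periodic orbit for $0<k\ll1$ near $\Gamma_n$. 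Conversely, if $M_n$ has no zeros, no such orbit exists near $\Gamma_n$.

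Away from resonant orbits, the perturbed Poincaré map $P_k$ (time $T_1$) satisfies $P_k^n = P_0^n + O(k)$, and $P_0^n$ rotates non-resonant invariant circles by a nonzero amount. Hence, on compact regions of energies, no $nT_1$-periodic points exist away from the resonant circles. This establishes the countability claim and localizes each $nT_1$-periodic solution near $\Gamma_n$.

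\textbf{Step 3 (at most one per period), the main obstacle.} By \eqref{eq08}, $\partial_kF_i$ factors as $\sin(\omega_1(t+t_0))$ times a function of $(r,p_r)$. Hence
\[
M_n(t_0)=A_n\sin(\omega_1 t_0)+B_n\cos(\omega_1 t_0)=C_n\sin(\omega_1 t_0+\varphi_n),
\]
where $A_n,B_n$ are integrals of Fourier-type over $\Gamma_n$. Here we use the reflection symmetry $r^{n}(-t)=r^{n}(t)$, $p_r^{n}(-t)=-p_r^{n}(t)$ noted in Section \ref{sec4}, which kills one of $A_n,B_n$.

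When $C_n\neq 0$, the zeros of $M_n$ in $t_0\in[0,nT_1)$ are simple, and they correspond to points of a single orbit of the perturbed system under the time shift $t_0\mapsto t_0+T_1$. All zeros are related by translations by multiples of $T_1/2$ and by $T_1$. We would argue that the shifts by $T_1$ give the same solution, while the sign alternation between consecutive zeros distinguishes a stable from an unstable one. The half-period shift is the delicate point. Using the $t\mapsto -t$ reversibility, we plan to identify the two zero families modulo the symmetry so as to obtain ``at most one'' radially periodic solution of \eqref{eq04} up to time translation. When $C_n=0$, we would conclude nothing survives at first order, consistent with ``at most''.

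We expect this counting step, and in particular verifying that the zeros do not produce genuinely distinct solutions, to be the hardest part.
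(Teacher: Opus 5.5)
Your Step 3 cannot be completed, and the obstruction is structural rather than technical. With the symmetric parametrization of $\Gamma_n$ ($r^n$ even, $p_r^n$ odd in $t$), the bracket $h(t)=(F_1G_2-F_2G_1)(r^n(t),p_r^n(t))$ is a function of $r$ times $p_r$, hence odd in $t$. So your Melnikov function is exactly
\[
M_n(t_0)=c_n\cos(\omega_1 t_0),\qquad c_n=\int_0^{nT_1}h(t)\sin(\omega_1 t)\,dt .
\]
If $c_n\neq 0$, its $2n$ simple zeros in $[0,nT_1)$ are $t_0\equiv T_1/4$ and $t_0\equiv 3T_1/4 \pmod{T_1}$. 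These are two classes under $t_0\mapsto t_0+T_1$, not one, i.e.\ two distinct period-$n$ orbits of the time-$T_1$ map.

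Reversibility does not merge them. The map $t\mapsto -t$, $p_r\mapsto -p_r$ is not a symmetry of the forced system: since $\sin(\omega_1 t)$ is odd, it turns $k$ into $-k$. The actual reversors are $(t,r,p_r)\mapsto(T_1/2+jT_1-t,\,r,\,-p_r)$. They act on the zero parameter by $t_0\mapsto T_1/2-t_0 \pmod{T_1}$, which fixes each class, so both solutions are individually symmetric. More decisively, \eqref{eq19} is Hamiltonian and $T'(H_n)>0$ by Theorem \ref{thm1}. The opposite signs of $M_n'$ on the two classes, which your own stable/unstable remark already notices, therefore make one orbit elliptic and the other hyperbolic. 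This is the usual island chain, as Poincar\'e--Birkhoff also predicts, and no symmetry or reversing symmetry can exchange Floquet types. So the method, correctly applied, gives two geometrically distinct $nT_1$-periodic solutions near $\Gamma_n$ whenever $c_n\neq 0$. ``At most one'', in the sense of distinct solutions, cannot be obtained this way and is generically false.

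The paper's proof follows the same route as yours: the same Melnikov function, the Fourier reduction to $\frac{nT_1}{2}\sqrt{(a_n^{(n)})^2+(b_n^{(n)})^2}\,\sin(\omega_1t_0+\phi^{(n)})$, and uniqueness of $\Gamma_n$ from Theorem \ref{thm1} and Lemmas \ref{lem3}--\ref{lem4}. But it never counts zeros. It passes directly from ``exactly one resonant orbit $\Gamma_n$'' to ``at most one periodic solution'', so it does not contain the idea you are looking for. What this approach really yields is at most one resonance zone of type $T(H)=nT_1$ for each $n$.

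Your Steps 1--2 also share two further gaps with the paper.
\begin{enumerate}
\item A solution of period $nT_1$ can bifurcate from any circle with $T(H)=\frac{n}{j}T_1$, $\gcd(j,n)=1$, $j<nT_1/T_0$; for example $T(H)=\frac{3}{2}T_1>T_0$ gives period $3T_1$. $P_0^n$ fixes such circles pointwise, so your displacement argument does not exclude them. The first-order Melnikov function vanishes identically there, and this, like your case $C_n=0$, is inconclusive rather than a non-existence result.
\item The displacement argument works only on compact energy ranges bounded away from $H_0$, and you have not shown that the threshold for $k$ is uniform in $n$. Indeed, if $T_1/T_0\notin\mathbb{Z}$, the center $(\bar r,0)$ continues by the implicit function theorem to a $T_1$-periodic solution far from $\Gamma_1$.
\end{enumerate}
One point in your favor: applying the Melnikov theorem directly to \eqref{eq18} avoids the paper's unjustified structural-stability step from \eqref{eq19} to \eqref{eq07}.
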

\begin{proof}
Firstly, let us focus on the system
\begin{equation}\label{eq19}
     \begin{cases}
     \dot{r} = F_1(r, p_r, 0) + k \cdot \frac{\partial F_1}{\partial k}(r, p_r, 0) ,\\
     \dot{p}_r = F_2(r, p_r, 0) + k \cdot \frac{\partial F_2}{\partial k}(r, p_r, 0) ,
     \end{cases}
\end{equation}
where the functions $F_1(r, p_r, 0)$ and $F_2(r, p_r, 0)$ are defined in \eqref{eq006} with $k=0$. Due to \eqref{eq08}, introducing the functions $G_1(r, p_r, 0)$ and $G_2(r, p_r, 0)$, \eqref{eq19} can be rewritten as
\begin{equation*}
     \begin{cases}
     \dot{r} = F_1(r, p_r, 0) + k \cdot G_1(r, p_r, 0) \sin(\omega_1 t) ,\\
     \dot{p}_r = F_2(r, p_r, 0) + k \cdot G_2(r, p_r, 0) \sin(\omega_1 t) ,
     \end{cases}
\end{equation*}
where
\begin{equation*}
\begin{aligned}
    &\frac{\partial F_1}{\partial k}(r, p_r, 0) = G_1(r, p_r, 0) \sin(\omega_1 t) ,\\
    &\frac{\partial F_2}{\partial k}(r, p_r, 0) = G_2(r, p_r, 0) \sin(\omega_1 t) .
\end{aligned}   
\end{equation*}
Note that both of the perturbation coefficients are periodic functions with period $T_1$. Therefore, for sufficiently small $k$, the Melnikov method can be employed to analyze the periodic solutions of equation \eqref{eq19}. To simplify the use of notation, $k=0$ will be omitted in the following. Notice that $(\bar{r}, 0)$ is the global center of the unperturbed system \eqref{eq09} and that the minimum period of the periodic solutions around the equilibrium is $T_0$. Thus for a periodic solution $\Gamma_1$ of the unperturbed system \eqref{eq09} with period $T_1>T_0$ and Hamiltonian $H_1$, define the Melnikov function as
\begin{multline}\label{eq018}
    M(t_0) = \int_{0}^{T_1} [F_1(r_1, p_{r_1}) G_2 (r_1, p_{r_1})\\ - F_{2}(r_1, p_{r_1}) G_1 (r_1, p_{r_1})] \sin \left(\omega_1 (t-t_0)\right) \mathrm{d}t.
\end{multline}
If this function has simple zeros, the perturbed system will have a   periodic solution with period $T_1$, located close to $\Gamma_1$. After a simple calculation that yields
\begin{equation*}
    F_1G_2 - F_{2} G_1=-\frac{\left( \frac{\mu_0}{2\pi} \right)^2 I_0 {r_1}^{-1} D(r_1) + \frac{\mu_0}{2\pi} \left( p_z + \frac{\mu_0}{2\pi} I_0 \ln {r_1} \right)  \partial_r D(r_1) }{H_1^2} p_{r_1}.
\end{equation*}
Note that the Fourier transform of the above $T_1$-periodic function $F_1G_2 - F_{2} G_1$ is
\begin{equation*}
   F_1G_2 - F_{2} G_1 = a_0 + \sum_{k=1}^{\infty} a_k  \cos\left(k \omega_1 t\right) 
    + \sum_{k=1}^{\infty} b_k \sin\left(k \omega_1 t\right),
\end{equation*}
where
\begin{equation*}
    \begin{aligned}
        &a_0=\frac{1}{T_1}\int_{0}^{T_1} (F_1G_2 - F_{2} G_1) (r_1, p_{r_1}) \, dt, \\
        &a_k=\frac{2}{T_1} \int_{0}^{T_1} (F_1G_2 - F_{2} G_1) (r_1, p_{r_1})  \cos\left(k \omega_1 t\right)  \mathrm{d}t,\\
        &b_k=\frac{2}{T_1} \int_{0}^{T_1} (F_1G_2 - F_{2} G_1) (r_1, p_{r_1}) \sin\left(k \omega_1 t\right)  \mathrm{d}t,
    \end{aligned}
\end{equation*}
are the Fourier coefficients. Substituting this Fourier transform into \eqref{eq018}, then
\begin{equation*}
    \begin{aligned}
        M(t_0)&=\int_0^{T_1} \left[ a_1  \cos\left( \omega_1 t\right)  \sin(\omega_1 (t-t_0)) +b_1 \sin\left(\omega_1 t\right)\sin(\omega_1 (t-t_0)) \right] \mathrm{d}t \\
        &=\int_0^{T_1} \left[ -a_1 \sin(\omega_1 t_0) \cos^2\left( \omega_1 t\right) +b_1 \cos (\omega_1 t_0) \sin^2\left(\omega_1 t\right) \right] \mathrm{d}t \\
        &=-a_1 \sin(\omega_1 t_0) \frac{T_1}{2}+b_1 \cos (\omega_1 t_0) \frac{T_1}{2} \\
        &=\frac{T_1}{2}\sqrt{a_1^2+b_1^2} \sin(\omega_1 t_0 + \phi),
    \end{aligned}
\end{equation*}
where $\phi$ is a constant that satisfies
\begin{equation*}
    \sin \phi =\frac{b_1}{\sqrt{a_1^2+b_1^2}},\quad \cos \phi =-\frac{a_1}{\sqrt{a_1^2+b_1^2}}.
\end{equation*}
Therefore, the Melnikov function $M(t_0)$ will have simple zeros when
\begin{equation*}
    a_1^2+b_1^2 \neq 0.
\end{equation*}
Due to Theorem \ref{thm1} it follows that when $T_1 > T_0$, there exists a unique $T_1$-periodic solution of \eqref{eq09}. Hence it can be deduced that there exists a unique $T_1$-periodic solution of system \eqref{eq19} in this context. 

For a periodic solution $\Gamma_n$ of the unperturbed system \eqref{eq09} with period $nT_1$ and Hamiltonian $H_n$, where $n$ is a positive integer and $n > 1$, define the Melnikov function as
\begin{multline}\label{eq019}
    M_n(t_0) = \int_{0}^{nT_1} [F_1(r_n, p_{r_n}) G_2 (r_n, p_{r_n})\\ - F_{2}(r_n, p_{r_n}) G_1 (r_n, p_{r_n})] \sin \left(\omega_1 (t-t_0)\right) \mathrm{d}t.
\end{multline}
Similar to the calculation above, substituting the Fourier transform of the $nT_1$-periodic function $(F_1G_2 - F_{2} G_1)(r_n, p_{r_n})$ into \eqref{eq019}, then
\begin{equation*}
    \begin{aligned}
        M_n(t_0)
        %&=\int_0^{nT_1} \left[ a_n^{(n)}  \cos\left( \omega_1 t\right)  \sin(\omega_1 (t-t_0)) +b_n^{(n)} \sin\left(\omega_1 t\right)\sin(\omega_1 (t-t_0)) \right] \mathrm{d}t \\
        &=\int_0^{nT_1} \left[ -a_n^{(n)} \sin(\omega_1 t_0) \cos^2\left( \omega_1 t\right) +b_n^{(n)} \cos (\omega_1 t_0) \sin^2\left(\omega_1 t\right) \right] \mathrm{d}t \\
        &=-a_n^{(n)} \sin(\omega_1 t_0) \frac{nT_1}{2}+b_n^{(n)} \cos (\omega_1 t_0) \frac{nT_1}{2} \\
        &=\frac{nT_1}{2}\sqrt{{(a_n^{(n)})}^2+{(b_n^{(n)})}^2} \sin(\omega_1 t_0 + \phi^{(n)}),
    \end{aligned}
\end{equation*}
where $a_n^{(n)}$, $b_n^{(n)}$ are the Fourier coefficients that 
\begin{equation*}
    \begin{aligned}
        &a_n^{(n)}=\frac{2}{nT_1} \int_{0}^{nT_1} (F_1G_2 - F_{2} G_1) (r_1, p_{r_1})  \cos\left(\omega_1 t\right)  \mathrm{d}t,\\
        &b_n^{(n)}=\frac{2}{nT_1} \int_{0}^{nT_1} (F_1G_2 - F_{2} G_1) (r_1, p_{r_1}) \sin\left(\omega_1 t\right)  \mathrm{d}t,
    \end{aligned}
\end{equation*}
and $\phi^{(n)}$ is a constant that satisfies
\begin{equation*}
    \sin \phi^{(n)} =\frac{b_n^{(n)}}{\sqrt{(a_n^{(n)})^2+(b_n^{(n)})^2}},\quad \cos \phi^{(n)} =-\frac{a_n^{(n)}}{\sqrt{(a_n^{(n)})^2+(b_n^{(n)})^2}}.
\end{equation*}
Therefore, the Melnikov function $M_n(t_0)$ admits simple zeros when 
\begin{equation*}
    (a_n^{(n)})^2+(b_n^{(n)})^2 \neq 0,
\end{equation*}
and it follows that, in this context, there exists a periodic solution of system \eqref{eq19} with period $nT_1$, which is located close to $\Gamma_n$. Additionally, for periodic solutions of the unperturbed system with other periods, there exist corresponding Melnikov functions that are constantly equal to zero, indicating that the perturbed system lacks periodic solutions with those specific periods. Consequently, by invoking Lemma \ref{lem3}, Lemma \ref{lem4} and Theorem \ref{thm1}, it is established that system \eqref{eq19} possesses at most countable periodic solutions, with periods being integer multiples of \( T_1 \) and each integer multiple corresponding to at most one periodic solution.
     
On the other hand, since the dynamical system \eqref{eq19} has at most countable periodic solutions and these are hyperbolic, this implies that the system is structurally stable. Thus, the dynamical properties of systems \eqref{eq18} and \eqref{eq19} are consistent when $k$ is sufficiently small. Consequently, when the current period $T_1 > T_0$, system \eqref{eq07} also has at most countable periodic solutions, with periods being integer multiples of $T_1$. Furthermore, for each such period, there is at most one corresponding periodic solution.
\end{proof}

\begin{remark}
    The set of coefficients $p_z$ and $I_0$ that make the Melnikov function equal to zero in the proof of the above theorem is relatively small. When $0<k\ll 1$, for certain parameters pair $(L, p_z)$, there may exist integers $n$ such that periodic solutions with period $nT_1$ do not exist, but if a periodic solution does exist for system \eqref{eq07}, its period must be an integer multiple of $T_1$.
\end{remark}

\begin{remark}
    In the extended phase space that includes time $t$, a charged particle in the unperturbed system (constant current) with initial energy $H > H_0$ is constrained to move on the corresponding invariant cylinder.  When a small harmonic modulation is introduced, the majority of these invariant cylinders are destroyed. Only a countable family of such cylinders persists, each corresponding to a radial periodic solution whose period is an integer multiple $nT_1$ of the current's period, with at most one such cylinder for each integer $n \geq 1$. These surviving invariant cylinders are organized hierarchically: any two adjacent ones form an invariant region in the extended phase space. Consequently, a particle starting within such a region remains trapped between the corresponding cylinders for all time. This structure implies that the particle's motion is strictly organized into discrete energy shells, illustrating a resonant hierarchy induced by the periodic driving.
\end{remark}

\section{Conclusions}
In this paper, we investigate the motion of a charged particle under the electromagnetic field generated by an electrically neutral infinite long straight wire with a time-periodic oscillating current, where the electromagnetic field solves Maxwell's equations uniquely for the corresponding distributional current $J$. Based on the reduction of the Lorentz force equation \eqref{eq04} to a Hamiltonian system \eqref{eq07} with one degree of freedom in \cite{garzon2023periodic}, this analysis reveals the rich resonance structure that emerges under harmonic current modulation. Using the Melnikov method for a small perturbation parameter $k$, We have proven that the continuous family of invariant cylinders in the unperturbed system breaks down into a discrete, countable set of periodic orbits. Specifically, we establish the existence of exactly one harmonic radial periodic solution and show that any other radial periodic solution must have a period that is an integer multiple $nT_1$ of the driving current's period. Moreover, for each such period, there is at most one corresponding periodic solution.

From a dynamical perspective, these results reveal a strict resonant hierarchy. The persistence of these specific periodic orbits implies that the phase space retains a structured stability: particles initialized in the regions between these surviving invariant cylinders remain radially confined. This work thus offers a mathematically rigorous picture of how time-dependent electromagnetic perturbations organize charged particle motion into discrete energy shells.

\section*{Acknowledgements}
The authors are supported by the NSF of China(Nos.12090014,12031020).
\appendix
\section{Proof of Lemma 1}\label{secA1}

Differentiating $s(r)$ yields
\begin{equation*}
    s'(r)=\frac{-2g'''(r) g(r) g'(r)-3g''(r) \left(g'(r)\right)^2+6\left(g''(r)\right)^2g(r)}{(g'(r))^4}.
\end{equation*}
Since the denominator is always positive, it suffices to show that the numerator is non-negative:
\begin{equation*}
    -2g'''(r) g(r) g'(r)-3g''(r) \left(g'(r)\right)^2+6\left(g''(r)\right)^2g(r) \geq 0.
\end{equation*}
 Consider the function $f(r)$ defined in equation \eqref{eq013}. Under the substitution
\begin{equation*}
x = e^{\frac{p_z}{\frac{\mu_0}{2\pi} I_0}} r,
\end{equation*}
the function $f$ becomes
\begin{equation}\label{eq016}
f(x) = 1 + I \ln^2 x + K x^{-2},
\end{equation}
where
\begin{equation*}
I = \left(\frac{\mu_0}{2\pi} I_0\right)^2, \quad K = L^2 e^{\frac{2 p_z}{\frac{\mu_0}{2\pi} I_0}}.
\end{equation*}
Differentiating \eqref{eq016} gives
\begin{equation*}
f'(x) = 2 I\ln x \cdot x^{-1} - 2Kx^{-3}.
\end{equation*}
There exists a unique $x_0$ such that $f'(x_0) = 0$, which implies
\begin{equation*}
K = I x_0^2 \ln x_0 .
\end{equation*}
Substituting this expression for $K$ into \eqref{eq016} yields
\begin{equation*}
f(x) = 1+I \ln^2 x + Ix_0^2 \ln x_0 \cdot x^{-2}.
\end{equation*}
Consequently, the function $g(x)$ can be written as
\begin{equation*}
g(x) =\frac{I}{2} (\ln^2 x + x_0^2 \ln x_0 \cdot x^{-2} -  \ln^2 x_0 -  \ln x_0).
\end{equation*}
Let
\begin{equation*}
G(x) = 2I^{-1} g(x_0 x), \quad a =\ln x_0,
\end{equation*}
which simplifies to
\begin{equation*}
G(x) =  (\ln x + a)^2 +  a  x^{-2} -  a^2 -  a,
\end{equation*}
where $a>0$. Therefore, it is sufficient to prove
\begin{equation*}
    -2G'''(x) G(x) G'(x)-3G''(x) \left(G'(x)\right)^2+6\left(G''(x)\right)^2G(x)\geq 0.
\end{equation*}
Replace $\log x$ with a new variable, which may as well be denoted as $x$. After simplification and calculation, it suffices to prove that
\begin{align*}
&3 \left( a + e^{2x} \left( -a + 2ax + x^2 \right) \right) \left( 3a + e^{2x} \left( 1-a - x \right) \right)^2 \\
&- \left( a + e^{2x} \left( -a + 2ax + x^2 \right) \right) \left( -a + e^{2x} (x+a) \right) \left( -12a + e^{2x} (-3+2a+2x) \right) \\
&- 3 \left( -a + e^{2x} (x+a) \right)^2 \left( 3a + e^{2x} (1-a - x) \right)\geq 0,
\end{align*}
where $x \in \mathbb{R}$, $a > 0$. Define the above function as $P(x, a)$, then the function can be expressed as a cubic polynomial in $a$:
\begin{equation*}
    P(x,a) = C_3 (x) a^3 + C_2 (x) a^2 + C_1 (x) a + C_0 (x),
\end{equation*}
where the coefficients are given by
\begin{align*}
C_3 (x) &= e^{6x} (2x+2) + e^{4x} (-8x-10) + e^{2x} (30x+2) + 6, \\
C_2 (x) &= e^{6x} (5x^2 + x) + e^{4x} (-12x^2 + 6x - 12) + e^{2x} (15x^2 + 17x + 12), \\
C_1 (x) &= e^{6x} (4x^3 - x^2 + 3x - 3) + e^{4x} (-4x^3 + x^2 + 3x + 3), \\
C_0 (x) &= e^{6x} x^4.
\end{align*}
Since $a > 0$, the non-negativity of $P(x, a)$ follows if each coefficient $C_i(x) \geq 0$ for all $x \in \mathbb{R}$. The following establishes this for each coefficient.

To show $C_1(x) \geq 0$, after substituting $x \mapsto x/2$, it suffices to show
\[
f_1(x) = e^{x}(2x^3 - x^2 + 6x - 12) - 2x^3 + x^2 + 6x + 12 \geq 0.
\]
Differentiating $f_1(x)$ gives
\begin{align*}
	f_1'(x) &= e^{x}(2x^3 + 5x^2 + 4x - 6) - 6x^2 + 2x + 6, \\
	f_1''(x) &= e^{x}(2x^3 + 11x^2 + 14x - 2) - 12x + 2, \\
	f_1'''(x) &= e^{x}(2x^3 + 17x^2 + 36x + 12) - 12, \\
	f_1^{(4)}(x) &= e^{x}(2x^3 + 23x^2 + 70x + 48).
\end{align*}
By analyzing the sign of $f_1^{(4)}(x)$, it can be determined that $f_1'''(x)>0$ for $x>0$ and $f_1'''(x)<0$ for $x<0$. Hence $f_1''(x) \geq f_1''(0) =0$. Furthermore, since $f_1'(0) = 0$, $f_1(x)$ attains its minimum at $x = 0$. Given that $f_1(0) = 0$, it follows that $f_1(x) \geq 0$ for all $x \in \mathbb{R}$, which implies $C_1(x) \geq 0$.

To prove $C_2(x) \geq 0$, substitute $x \mapsto x/2$ and define
\[
f_2(x) = e^{2x}(5x^2 + 2x) - 12e^{x}(x^2 - x + 4) + 15x^2 + 34x + 48.
\]
Differentiating $f_2(x)$ yields
\begin{align*}
	f_2'(x) &= e^{2x}(10x^2 + 14x + 2) - 12e^{x}(x^2 + x + 3) + 30x + 34, \\
	f_2''(x) &= e^{2x}(20x^2 + 48x + 18) - 12e^{x}(x^2 + 3x + 4) + 30, \\
	f_2'''(x) &= 4e^{x} \left( e^{x}(10x^2 + 34x + 21) - 3x^2 - 15x - 21 \right).
\end{align*}
Let $g_2(x) = e^{x}(10x^2 + 34x + 21) - 3x^2 - 15x - 21$. Then
\begin{align*}
	g_2'(x) &= e^{x}(10x^2 + 54x + 55) - 6x - 15, \\
	g_2''(x) &= e^{x}(10x^2 + 74x + 109) - 6, \\
	g_2'''(x) &= e^{x}(10x^2 + 94x + 183).
\end{align*}
Obviously, $g_2'''(x)$ has two real zeros. By examining the sign at these zeros, it can be deduced that $g_2''(x) > 0$ for $x > 0$ and $g_2''(x) < 0$ for $x < 0$. Hence $g_2'(x) \geq g_2'(0) >0$, which follows that $g_2(x)$ is strictly increasing and has a unique zero at $x=0$. Therefore $f_2''(x) \geq f_2''(0)=0$, thus $f_2(x)$ is convex and attains its unique minimum at $x=0$. Since $f_2(0) = 0$, it follows that $f_2(x) \geq 0$, and thus $C_2(x) \geq 0$.

To show $C_3(x) \geq 0$, define
\[
f_3(x) = e^{3x}(x+2) + e^{2x}(-4x-10) + e^{x}(15x+2) + 6.
\]
Its derivative is
\[
f_3'(x) = e^x \left( e^{2x}(3x+7) + e^{x}(-8x-24) + 15x+17 \right).
\]
Let $g_3(x) = e^{2x}(3x+7) + e^{x}(-8x-24) + 15x+17$. Then
\[
g_3'(x) = e^{2x}(6x+17) + e^{x}(-8x-32) + 15,
\]
and
\[
g_3''(x) = 4e^x \left( e^{x}(3x+10) - (2x+10) \right).
\]
Define $h_3(x) = e^{x}(3x+10) - 2x - 10$. Then
\[
h_3'(x) = e^{x}(3x+13) - 2, \quad h_3''(x) = e^{x}(3x+16).
\]
By analyzing the sign of $h_3''(x)$, it follows that $h_3'(x)$  has a unique real zero. This allows for a straightforward determination of the sign variation of $h_3(x)$, leading to the conclusion that $g_3'(x) \geq 0$ for all $x \in \mathbb{R}$, with equality only at $x=0$. Since $g_3(0)=0$, the function $f_3(x)$ attains its minimum at $x = 0$. Evaluating $f_3(0) = 0$ yields $f_3(x) \geq 0$ for all $x$, and therefore $C_3(x) \geq 0$.

Therefore, all coefficients $C_0(x)$, $C_1(x)$, $C_2(x)$, and $C_3(x)$ are non-negative for all $x \in \mathbb{R}$, which completes the proof. 

%% For citations use: 
%%       \cite{<label>} ==> [1]

%%
%Example citation, See \cite{lamport94}.

%% If you have bib database file and want bibtex to generate the
%% bibitems, please use
%%
%%  \bibliographystyle{elsarticle-num} 
%%  \bibliography{<your bibdatabase>}

%% else use the following coding to input the bibitems directly in the
%% TeX file.

%% Refer following link for more details about bibliography and citations.
%% https://en.wikibooks.org/wiki/LaTeX/Bibliography_Management

%\begin{thebibliography}{00}

%% For numbered reference style
%% \bibitem{label}
%% Text of bibliographic item

%\bibitem{lamport94}
%  Leslie Lamport,
%  \textit{\LaTeX: a document preparation system},
%  Addison Wesley, Massachusetts,
%  2nd edition,
%  1994.

%\end{thebibliography}

\end{document}